\newif\ifPDF
\DeclareMathOperator{\sech}{sech}
\newtheorem{theorem}{Theorem}[section]
\newtheorem{lemma}[theorem]{Lemma}
\newcommand{\aver}[1]{\langle {#1} \rangle}
\newcommand{\bLambda}{\boldsymbol \Lambda}
\newcommand{\bbR}{\mathbb R}
 \newcommand{\bff}{\mathbf f}
\newcommand{\bg}{\mathbf g}
 \newcommand{\bn}{\mathbf n}
 \newcommand{\bp}{\mathbf p}
 \newcommand{\bx}{\mathbf x} 
\newcommand{\by}{\mathbf y} \newcommand{\bz}{\mathbf z}
\newcommand{\bI}{\mathbf I}
\newcommand{\cA}{\mathcal A} 
\newcommand{\cC}{\mathcal C}  
 \newcommand{\cH}{\mathcal H}
\DeclareMathOperator*{\argmin}{arg\,min}
\newenvironment{keywords}
{\noindent{\bf Key words.}\small}{\par\vspace{1ex}}
\title{A one-step reconstruction algorithm for quantitative photoacoustic imaging}
\author{
	Tian Ding\thanks{Department of Mathematics, University of Texas, Austin, TX 78712; tding@math.utexas.edu}
\and
	Kui Ren\thanks{Department of Mathematics \& ICES, University of Texas, Austin, TX 78712; ren@math.utexas.edu}
\and
	Sarah Vall\'elian\thanks{Department of Mathematics, University of Texas, Austin, TX 78712; svallelian@math.utexas.edu}
}
\begin{document}

\date{}

\maketitle



\begin{abstract}
Quantitative photoacoustic tomography (QPAT) is a recent hybrid imaging modality that couples optical tomography with ultrasound imaging to achieve high resolution imaging of optical properties of scattering media. Image reconstruction in QPAT is usually a two-step process. In the first step, the initial pressure field inside the medium, generated by the photoacoustic effect, is reconstructed using measured acoustic data. In the second step, this initial ultrasound pressure field datum is used to reconstruct optical properties of the medium. We propose in this work a one-step inversion algorithm for image reconstruction in QPAT that reconstructs the optical absorption coefficient directly from measured acoustic data. The algorithm can be used to recover simultaneously the absorption coefficient and the ultrasound speed of the medium from \emph{multiple} acoustic data sets, with appropriate \emph{a priori} bounds on the unknowns. We demonstrate, through numerical simulations based on synthetic data, the feasibility of the proposed reconstruction method.
\end{abstract}


\begin{keywords}
Photoacoustic tomography, hybrid inverse problems, image reconstruction, one-step reconstruction, numerical optimization.
\end{keywords}




\section{Introduction}
\label{SEC:Intro}

Photoacoustic tomography (PAT) is a recent multi-physics biomedical imaging modality that aims at achieving simultaneously high resolution and high contrast in imaging by coupling the high-resolution ultrasound imaging modality with the high-contrast diffuse optical tomography (DOT) modality. In PAT, near infra-red (NIR) photons are sent into an optically absorbing and scattering medium, for instance a piece of biological tissue, $\Omega \subseteq\bbR^d$ ($d\ge 2$), where they diffuse. The density of the photons, denoted by $u(\bx)$, solves the following diffusion equation~\cite{ArSc-IP09,BaUh-IP10}
\begin{equation}\label{EQ:Diff}
	\begin{array}{rcll}
		-\nabla\cdot D(\bx) \nabla u(\bx) + \sigma(\bx) u (\bx) &=& 0, & \mbox{in}\ \ \Omega \\
		u &=& g(\bx), & \mbox{on}\ \ \partial \Omega
	\end{array}
\end{equation}where $D$ and $\sigma$ are respectively the diffusion and absorption coefficients of the medium, and $g$ is the incoming photon source. The medium absorbs a portion of the incoming photons and heats up due to the absorbed energy. The heating then results in thermal expansion of the tissue and the expansion generates a pressure field. This process is called the photoacoustic effect. The pressure field generated by the photoacoustic effect can be written as~\cite{BaUh-IP10,FiScSc-PRE07}
\begin{equation}\label{EQ:Data}
	H(\bx) = \Gamma(\bx) \sigma(\bx) u(\bx)
\end{equation}
where $\sigma(\bx) u(\bx)$ is the total energy absorbed locally at $\bx\in\Omega$ and $\Gamma$ is the Gr\"uneisen coefficient that describes the efficiency of the photoacoustic effect. This initial pressure field $H$ then propagates in the form of ultrasound with sound speed $c(\bx)$~\cite{BaUh-IP10,FiScSc-PRE07,StUh-IP09}
\begin{equation}\label{EQ:Acous}
	\begin{array}{rcll}
  	\dfrac{1}{c^2(\bx)} \dfrac{\partial^2 p}{\partial t^2}-\Delta p &=& 0,& \mbox{in}\ \bbR_+\times \Omega\\
	p(0,\bx) &=& H(\bx), &\mbox{in}\ \Omega\\
	\dfrac{\partial p}{\partial t}(0,\bx) &=& 0, &\mbox{in}\ \Omega\\
	p(t,\bx) &=& 0,& \mbox{on}\ \bbR_+\times\partial\Omega
	\end{array}
\end{equation}
assuming that the medium has no acoustic attenuation effect. We refer interested readers to~\cite{BaJoJu-IP10,BaUh-IP10,FiScSc-PRE07} for the derivation and justification of the models~\eqref{EQ:Diff} and ~\eqref{EQ:Acous}.

The time-dependent acoustic signals that arrive on the surface of the medium, $\frac{\partial p}{\partial n}|_{(0,T)\times \partial\Omega}$, are then measured with acoustic devices for a sufficiently long time $T$. From the knowledge of these acoustic measurements, one is interested in reconstructing the diffusion and absorption properties of the medium; see~\cite{Bal-IO12,Beard-IF11,CoLaBe-SPIE09,Kuchment-MLLE12,KuKu-HMMI10,LiWa-PMB09,PaSc-IP07,Scherzer-Book10,Wang-DM04,Wang-IEEE08} for overviews of photoacoustic tomography.

Image reconstructions in PAT are usually done in a two-step process. In the first step, one uses the boundary acoustic signal to reconstruct the initial pressure field $H$ in~\eqref{EQ:Data}, inside the medium. This step has been extensively studied in the past decade under various circumstances; see for instance ~\cite{AgKuKu-PIS09,AgQu-JFA96,AmBrJuWa-LNM12,BuMaHaPa-PRE07,CoArBe-IP07,FiHaRa-SIAM07,Haltmeier-SIAM11B,HaScSc-M2AS05,Hristova-IP09,KiSc-SIAM13,KuKu-EJAM08,Kunyansky-IP08,Nguyen-IPI09,PaSc-IP07,QiStUhZh-SIAM11,StUh-IP09,Tittelfitz-IP12} and references therein. In the second step, usually called the quantitative step, one reconstructs the diffusion coefficient $D(\bx)$, the absorption coefficient $\sigma(\bx)$, and whenever possible, the Gr\"uneisen coefficient $\Gamma(\bx)$ using the internal datum $H(\bx)$. This step has recently received great attention as well; see for instance ~\cite{AmBoJuKa-SIAM10,BaJoJu-IP10,BaUh-IP10,BaUh-CPAM13,CoArKoBe-AO06,CoTaAr-CM11,GaOsZh-LNM12,LaCoZhBe-AO10,MaRe-CMS14,NaSc-SIAM14,PuCoArKaTa-IP14,ReGaZh-SIAM13,SaTaCoAr-IP13,ShCoZe-AO11,Zemp-AO10} and references therein. Uniqueness and stability results for the reconstruction procedures have been established in different settings.

The above two-step process works perfectly fine in the setting where acoustic measurement can be performed everywhere on the boundary $\partial\Omega$ and the ultrasound speed $c(\bx)$ is known. In this case, the initial pressure field $H(\bx)$ can be reconstructed relatively stably in the first step of PAT reconstructions. In the case where acoustic data can only be measured on part of the boundary, i.e. the so called limited-view setting, the reconstruction of $H$ in the first step can be fairly unstable (especially in the part of the domain from where acoustic signals do not travel easily to the measurement locations)~\cite{CoArBe-IP07,HuXiMaWa-JBO13}. In the case where the sound speed $c(\bx)$ is also unknown, which is often the case in real-world applications, the first step reconstruction is also problematic. In this case, one has to reconstruct simultaneously the wave speed $c$ and the initial pressure field $H$. The reconstruction has been shown recently by Stefanov and Uhlmann~\cite{StUh-IPI13}, based on a slightly different formulation of the acoustic problem, to be very \emph{unstable}.

In practical applications, we almost always measure acoustic data generated from multiple optical illuminations. The two-step process, however, does not take advantage of these multiple data sets. The reason is that if we change the illumination source $g$, the initial pressure field $H$ will also be changed. Therefore, every time we add a new measurement, we introduce a new unknown $H$ in the first step of the PAT reconstruction.

The ultimate objectives of the PAT reconstructions are the coefficients $(c, D, \sigma, \Gamma)$. These coefficients do not change when the illumination is changed. Therefore adding more data should not introduce more unknowns in the reconstruction. In other words, instead of reconstructing the unknowns $(c, H)$ and then the coefficients $(D, \sigma, \Gamma)$, we should reconstruct directly from the measured data the coefficients $(c, D, \sigma, \Gamma)$, without the intermediate variable $H$. This way, one can potentially use data sets from multiple illuminations to help stabilize the reconstruction, even in the case of partial measurements (for each illumination).

The aim of the current work is exactly to develop one-step reconstruction strategies that recover the optical coefficients and the wave speed directly from the measured acoustic data. For simplicity of presentation, we focus only on the absorption coefficient $\sigma$ and the wave speed assuming that the diffusion coefficient $D$ and the Gr\"uneisen coefficient $\Gamma$ are known. In this case, we have to invert the nonlinear map $\Lambda(c,\sigma; g)$ defined through the following relation:
\begin{equation}\label{EQ:Data Acous}
	 \frac{\partial p}{\partial n}|_{(0,T)\times \partial\Omega} = \Lambda(c,\sigma;g).
\end{equation}
To the best of our knowledge, there is no uniqueness result on this inverse problem of reconstructing simultaneously the ultrasound speed and the absorption coefficient, besides the special case in~\cite{KiSc-SIAM13}. The result in~\cite{StUh-IPI13} indicates that the reconstruction would be \emph{unstable} when only one data set, i.e. data collected from only one optical illumination, is used. Our main goal here is to show \emph{numerically} that using multiple data sets allows us to obtain fairly stable reconstructions, with appropriate \emph{a priori} bounds on the unknown coefficients. Rigorous mathematical study of the stability of our reconstruction approach will be a future work.

To set up the problem, we assume in the rest of the paper that (A-i) the domain $\Omega$ is bounded with smooth boundary $\partial\Omega$, (A-ii) the boundary condition $g$ is the restriction of a $\cC^4$ function on $\partial\Omega$, (A-iii) the coefficients $c(\bx)\in \cC^4(\bar\Omega)$, $\sigma(\bx)\in C^4(\Omega)$, $D(\bx)\in C^3(\Omega)$, and $\Gamma(\bx)\in C^4(\Omega)$, and (A-iv) $(c, \sigma, D, \Gamma)\in\cA_{\alpha}\times\cA_{\beta}\times\cA_{\zeta}\times\cA_{\xi}$ with
\begin{equation}
\cA_{\alpha}=\{f(\bx): 0< \underline{\alpha} \le f(\bx) \le \overline{\alpha} <\infty,\ \forall\bx\in\Omega\},
\end{equation}
$\underline{\alpha}$ and $\overline{\alpha}$ being two constants. With the assumptions (A-i)-(A-iv), we conclude from standard elliptic theory~\cite{Evans-Book98,GiTr-Book00} that the diffusion equation~\eqref{EQ:Diff} admits a unique solution $u \in \cC^4(\Omega)$. This implies that $\Gamma\sigma u \in\cC^4(\Omega)$. Therefore the wave equation~\eqref{EQ:Acous} admits a unique solution $p\in\cC^4((0,T)\times\Omega)$ following the theory in~\cite{DiDoNaPaSi-SIAM02,Isakov-Book02}. The datum~\eqref{EQ:Data Acous} is then well-defined and can be viewed as a map $\Lambda:\cC^4(\bar\Omega)\times\cC^4(\Omega) \mapsto \cH^{5/2}((0,T)\times\partial\Omega)$.

The rest of the paper is structured as follows. We first present a one-step inversion algorithm in the linearized setting in Section~\ref{SEC:Lin}. We then implement in Section~\ref{SEC:Min} the algorithm for reconstructions in the fully nonlinear setting. We show some numerical reconstructions based on synthetic data in Section~\ref{SEC:Num} to demonstrate the feasibility of the algorithm. Concluding remarks are offered in Section~\ref{SEC:Concl}.

\section{Inversion with Born approximation}
\label{SEC:Lin}

We denote by $J$ the total number of illumination sources available, and $\frac{\partial p_j}{\partial n}|_{(0,T)\times \partial \Omega}$ the measured datum on the boundary for illumination $g_j$ ($1\le j\le J$), in the time interval $(0,T)$. Our objective is thus to \emph{numerically} invert the following nonlinear system to reconstruct $(c,\sigma)$:
\begin{equation}\label{EQ:Nonl Syst}
	\frac{\partial \bp}{\partial \bn}\equiv
	\left(
	\begin{array}{c}
		\frac{\partial p_1}{\partial n}|_{(0,T)\times \partial\Omega}\\
		\vdots\\
		\frac{\partial p_J}{\partial n}|_{(0,T)\times \partial\Omega}
	\end{array}
	\right)
	=\left(
	\begin{array}{c}
		\Lambda(c,\sigma;g_1)\\
		\vdots\\
		\Lambda(c,\sigma;g_J)
	\end{array}
	\right)
	\equiv \bLambda(c,\sigma;\bg) .
\end{equation}
In this section, we develop a one-step algorithm for the inverse problem in the linearized setting where we intend to reconstruct perturbations to the coefficients around known backgrounds. This is useful in some real-world applications where variations of the ultrasound speed and the absorption coefficient are relatively small (for instance, it is well-known that ultrasound speed in biological tissues is very similar to that in water with about $15\%$ variations from tissue to tissue~\cite{YoKaHaYoSoCh-OE12}). 

\subsection{The Born approximation}

We denote by $c_0(\bx)$ and $\sigma_0(\bx)$ respectively the \emph{known} background ultrasound speed and absorption coefficient. We assume that the true coefficients are of the forms
\begin{equation}\label{EQ:Coeff Pert}
	c(\bx) = c_0(\bx) + \tilde c(\bx), \quad\mbox{and}\quad 
\sigma(\bx) = \sigma_0(\bx)+\tilde{\sigma}(\bx),
\end{equation}
where the perturbations $\tilde c$ and $\tilde\sigma$ satisfy $\|\tilde c/c_0\|_{L^\infty(\Omega)} \ll 1$ and $\|\tilde\sigma/\sigma_0\|_{L^\infty(\Omega)}\ll 1$ respectively. The perturbations in the coefficients lead to perturbations in the solutions to the diffusion equation and the wave equation as follows:
\begin{equation}\label{EQ:Sol Pert}
	u_j=u_j^0+\tilde u_j, \quad \mbox{and} \quad 
	p_j(t,\bx) = p_j^0(t,\bx) + \tilde p_j(t,\bx)
\end{equation}
where the background photon densities $u^0_j$ are determined as solutions to
\begin{equation}\label{EQ:Diff Backgr}
\begin{array}{rcll}
-\nabla \cdot D\nabla u^0_j (\bx) + \sigma_0 (\bx) u^0_j (\bx) \, &=& \,  0, &\mbox{in}\ \Omega\\
u_j^0 &=& g_j,  & \mbox{on}\ \partial \Omega
\end{array}
\end{equation}
and the background acoustic pressure fields $p_j^0$ solve the acoustic wave equations:
\begin{equation}\label{EQ:Acous Backgr}
	\begin{array}{rcll}
  	\dfrac{1}{c_0^2(\bx)} \dfrac{\partial^2 p_j^0}{\partial t^2} -\Delta p_j^0 &=& 0, & \mbox{in}\ (0,T)\times \Omega\\
	p_j^0(0,\bx) &=& \Gamma \sigma_0 u_j^0(\bx), &\mbox{in}\ \Omega\\
	\dfrac{\partial p_j^0}{\partial t}(0,\bx) &=& 0, &\mbox{in}\ \Omega\\
	p_j^0(t,\bx) &=& 0, & \mbox{on}\ (0,T)\times\partial\Omega
	\end{array}
\end{equation}

We check that the perturbations $\tilde u_j$ and $\tilde p_j$ solve respectively, after neglecting higher-order terms, the diffusion equation:
\begin{equation}\label{EQ:Diff Sol Pert}
\begin{array}{rcll}
-\nabla \cdot D\nabla \tilde u_j (\bx) + \sigma_0 (\bx) \tilde u_j (\bx) &=& -\tilde \sigma u_j^0, &\mbox{in}\ \Omega\\
\tilde u_j &=& 0,  & \mbox{on}\ \partial \Omega
\end{array}
\end{equation}
and the wave equation:
\begin{equation}\label{EQ:Acous Sol Pert}
	\begin{array}{rcll}
  	\dfrac{1}{c_0^2(\bx)} \dfrac{\partial^2 \tilde p_j}{\partial t^2} -\Delta \tilde p_j &=& \dfrac{2}{c_0^3} \dfrac{\partial^2 p_j^0}{\partial t^2}\tilde{c}(\bx), & \mbox{in}\ (0,T)\times \Omega\\
	\tilde p_j(0,\bx) &=& \Gamma (\tilde{\sigma}u^0_j + \sigma_0 \tilde{u}_j), &\mbox{in}\ \Omega\\
	\dfrac{\partial \tilde p_j}{\partial t}(0,\bx) &=& 0, &\mbox{in}\ \Omega\\
	\tilde p_j(t,\bx) &=& 0, & \mbox{on}\ (0,T)\times\partial\Omega
	\end{array}
\end{equation}

Let us now introduce the linear operators $\Lambda_c^j(c_0,\sigma_0)$ and $\Lambda_\sigma^j(c_0,\sigma_0)$ through:
\begin{equation}\label{EQ:Frechet D}
	\frac{\partial \tilde p_j^c}{\partial n}|_{(0,T)\times\partial\Omega}=\Lambda_c^j(c_0,\sigma_0)\tilde c, \quad 
	\mbox{and}\quad 
	\frac{\partial \tilde p_j^\sigma}{\partial n}|_{(0,T)\times\partial\Omega}=\Lambda_\sigma^j(c_0,\sigma_0)\tilde \sigma,
\end{equation}
with $\tilde p_j^c$ and $\tilde p_j^\sigma$ respectively the solutions to
\begin{equation}\label{EQ:Acous Sol Pert c}
	\begin{array}{rcll}
  	\dfrac{1}{c_0^2(\bx)} \dfrac{\partial^2 \tilde p_j^c}{\partial t^2} -\Delta \tilde p_j^c &=& \dfrac{2}{c_0^3} \dfrac{\partial^2 p_j^0}{\partial t^2}\tilde{c}(\bx), & \mbox{in}\ (0,T)\times \Omega\\
	\tilde p_j^c(0,\bx) &=& 0, &\mbox{in}\ \Omega\\
	\dfrac{\partial \tilde p_j^c}{\partial t}(0,\bx) &=& 0, &\mbox{in}\ \Omega\\
	\tilde p_j^c(t,\bx) &=& 0, & \mbox{on}\ (0,T)\times\partial\Omega
	\end{array}
\end{equation}
and
\begin{equation}\label{EQ:Acous Sol Pert sigma}
	\begin{array}{rcll}
  	\dfrac{1}{c_0^2(\bx)} \dfrac{\partial^2 \tilde p_j^\sigma}{\partial t^2} -\Delta \tilde p_j^\sigma &=& 0, & \mbox{in}\ (0,T)\times \Omega\\
	\tilde p_j^\sigma(0,\bx) &=& \Gamma (\tilde{\sigma}u^0_j + \sigma_0 \tilde{u}_j), &\mbox{in}\ \Omega\\
	\dfrac{\partial \tilde p_j^\sigma}{\partial t}(0,\bx) &=& 0, &\mbox{in}\ \Omega\\
	\tilde p_j^\sigma(t,\bx) &=& 0, & \mbox{on}\ (0,T)\times\partial\Omega
	\end{array}
\end{equation}
We can then write the perturbation of the datum as
\begin{equation}\label{EQ:Data Split}
	\frac{\partial \tilde p_j}{\partial n}(t,\bx)|_{(0,T)\times\partial\Omega}=\Lambda_c^j(c_0,\sigma_0)\tilde c + \Lambda_\sigma^j(c_0,\sigma_0) \tilde \sigma.
\end{equation}
We then collect data for all $J$ sources to get a system of equations for the unknowns:
\begin{equation}\label{EQ:Lin Sys}
	\bLambda_{c,\sigma}(c_0,\sigma_0)
	\left(
	\begin{array}{c}
	\tilde c\\
	\tilde \sigma
	\end{array}
	\right)
	=\frac{\partial\tilde\bp}{\partial\bn}
\end{equation}
with
\begin{equation}
	\bLambda_{c,\sigma}(c_0,\sigma_0)=\left(
	\begin{array}{cc}
	\Lambda_c^1(c_0,\sigma_0) & \Lambda_\sigma^1(c_0,\sigma_0)\\
	\vdots & \vdots\\
	\Lambda_c^J(c_0,\sigma_0) & \Lambda_\sigma^J(c_0,\sigma_0)
	\end{array}
	\right),
	\qquad
	\frac{\partial\tilde\bp}{\partial\bn}=
	\left(
	\begin{array}{c}
	\frac{\partial \tilde p_1}{\partial n}\\
	\vdots\\
	\frac{\partial \tilde p_J}{\partial n}
	\end{array}
	\right) .
\end{equation}

This is the Born approximation of the original nonlinear problem~\eqref{EQ:Nonl Syst}. The approximation can be justified as stated in the following lemma.
\begin{lemma}\label{LMMA:Derivative}
Let $\Omega$, $D$, $\Gamma$ and $g_j$ satisfy the assumptions in (A-i)-(A-iv). Then the datum generated from $g_j$, viewed as the map:
\begin{equation}
	\Lambda(c,\sigma;g_j):
	\begin{array}{ccl}
		(c,\sigma) &\mapsto& \dfrac{\partial p_j}{\partial n}|_{(0,T)\times\partial\Omega}\\
		\cC^4(\bar\Omega)\times \cC^4(\Omega) &\mapsto & \cH^{1/2}((0,T)\times\partial\Omega)
	\end{array}
	\end{equation}
	is Fr\'echet differentiable at any $(c_0,\sigma_0)\in \cC^4(\bar\Omega)\times \cC^4(\Omega)$ that satisfies the assumption in (A-iv). The derivative at $(c_0,\sigma_0)$ in the direction $(\tilde c, \tilde \sigma)\in \cC^4(\bar\Omega)\times \cC^4(\Omega)$ (such that $c_0+\tilde c$ and $\sigma_0+\tilde\sigma$ satisfy (A-iv)) is $(\Lambda_c^j(c_0,\sigma_0)\tilde c, \Lambda_\sigma^j(c_0,\sigma_0)\tilde\sigma)$ as defined in~\eqref{EQ:Frechet D}.
\end{lemma}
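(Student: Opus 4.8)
The plan is to regard $\Lambda(\cdot,\cdot;g_j)$ as the composition of three operations: the elliptic solve $\sigma\mapsto u_j$ governed by \eqref{EQ:Diff Backgr}, the pointwise product $(\sigma,u_j)\mapsto\Gamma\sigma u_j$ that furnishes the initial acoustic datum, and the hyperbolic solve-and-trace $(c,H_j)\mapsto\partial_n p_j|_{(0,T)\times\partial\Omega}$. Rather than invoke an abstract chain rule, I would estimate the Taylor remainder directly, since the displayed perturbation equations already isolate the candidate derivative. Writing $c=c_0+\tilde c$, $\sigma=\sigma_0+\tilde\sigma$ and denoting by $u_j-u_j^0$, $p_j-p_j^0$ the \emph{exact} perturbations (as opposed to the \emph{linearized} quantities $\tilde u_j$, $\tilde p_j^c$, $\tilde p_j^\sigma$ of \eqref{EQ:Diff Sol Pert}, \eqref{EQ:Acous Sol Pert c}, \eqref{EQ:Acous Sol Pert sigma}), differentiability amounts to showing that the residual $w_j:=(p_j-p_j^0)-\tilde p_j^c-\tilde p_j^\sigma$ has Neumann trace of size $o(\|(\tilde c,\tilde\sigma)\|_{\cC^4\times\cC^4})$ in $\cH^{1/2}((0,T)\times\partial\Omega)$.

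I would treat the elliptic and algebraic links first, as they are routine. Subtracting \eqref{EQ:Diff Backgr} from the equation for $u_j$ shows that $u_j-u_j^0$ solves the background elliptic operator with right-hand side $-\tilde\sigma u_j^0-\tilde\sigma(u_j-u_j^0)$, whereas $\tilde u_j$ drops the last, quadratic term; hence the elliptic error $e_j:=(u_j-u_j^0)-\tilde u_j$ solves the same operator with source $-\tilde\sigma(u_j-u_j^0)$. Under (A-i)--(A-iv), Schauder/elliptic estimates give $\|u_j-u_j^0\|_{\cC^4}\lesssim\|\tilde\sigma\|_{\cC^4}$, and since $\cC^4(\Omega)$ is a multiplicative algebra, $\|e_j\|_{\cC^4}\lesssim\|\tilde\sigma\|_{\cC^4}^2$. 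Consequently the exact initial acoustic datum $\Gamma\sigma u_j-\Gamma\sigma_0 u_j^0=\Gamma(\tilde\sigma u_j^0+\sigma_0(u_j-u_j^0)+\tilde\sigma(u_j-u_j^0))$ differs from the linearized one $\Gamma(\tilde\sigma u_j^0+\sigma_0\tilde u_j)$ by $\Gamma(\sigma_0 e_j+\tilde\sigma(u_j-u_j^0))$, which is again $O(\|\tilde\sigma\|_{\cC^4}^2)$.

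For the hyperbolic link, I would use the expansion $c^{-2}=c_0^{-2}-2\tilde c\,c_0^{-3}+r(\tilde c)$ with $\|r(\tilde c)\|_{\cC^4}\lesssim\|\tilde c\|_{\cC^4}^2$. Subtracting \eqref{EQ:Acous Backgr} from \eqref{EQ:Acous} shows the exact acoustic perturbation solves the background wave operator $c_0^{-2}\partial_t^2-\Delta$ with source $\tfrac{2\tilde c}{c_0^3}\partial_t^2 p_j^0+\tfrac{2\tilde c}{c_0^3}\partial_t^2(p_j-p_j^0)-r(\tilde c)\,\partial_t^2 p_j$. Subtracting the defining equations \eqref{EQ:Acous Sol Pert c} and \eqref{EQ:Acous Sol Pert sigma} then yields that $w_j$ solves the same wave operator with source $\tfrac{2\tilde c}{c_0^3}\partial_t^2(p_j-p_j^0)-r(\tilde c)\,\partial_t^2 p_j$ and initial datum $\Gamma(\sigma_0 e_j+\tilde\sigma(u_j-u_j^0))$, $\partial_t w_j(0,\cdot)=0$. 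Because $\|p_j-p_j^0\|\lesssim\|(\tilde c,\tilde\sigma)\|_{\cC^4\times\cC^4}$ to leading order, every driving term is quadratically small, and the Taylor remainder $\partial_n p_j-\partial_n p_j^0-(\Lambda_c^j\tilde c+\Lambda_\sigma^j\tilde\sigma)=\partial_n w_j$ is the Neumann trace of a wave solution with $O(\|(\tilde c,\tilde\sigma)\|^2)$ data.

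To close, I would combine energy estimates for $c_0^{-2}\partial_t^2-\Delta$ (well-posed on $(0,T)\times\Omega$ since $c_0\in\cC^4$ is bounded above and below by (A-iv)) with the sharp/hidden trace regularity theorem for the wave equation to bound $\|\partial_n w_j\|_{\cH^{1/2}((0,T)\times\partial\Omega)}$ by interior Sobolev norms of the source and initial data of $w_j$, all of which are $O(\|(\tilde c,\tilde\sigma)\|^2)$, giving the required $o(\|(\tilde c,\tilde\sigma)\|)$. I expect the hyperbolic step to be the main obstacle: the remainder source $\tfrac{2\tilde c}{c_0^3}\partial_t^2(p_j-p_j^0)$ carries a \emph{second} time derivative of the perturbation, so naive energy estimates lose regularity; controlling it forces one to propagate estimates for $\partial_t^2$ of the wave (by differentiating the equation in time and imposing the compatibility conditions at $t=0$ that the $\cC^4$ hypotheses on $g_j$, $c_0$, $\sigma_0$, $D$, $\Gamma$ are there to guarantee), and to invoke sharp trace theory of Lasiecka--Lions--Triggiani type rather than the naive trace theorem in order to land the Neumann derivative in $\cH^{1/2}$. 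Matching these regularity budgets is the delicate part; the elliptic and algebraic links are comparatively immediate.
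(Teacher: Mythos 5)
Your proposal is correct in substance and follows the same core strategy as the paper's appendix: write down the PDEs solved by the exact perturbation and by the Taylor remainder, show the remainder's source and initial data are quadratically small, and pass to the boundary by a trace theorem. The differences are in execution. First, the paper never expands $1/c^2$, so the remainder $r(\tilde c)$ you carry does not appear: it changes variables to $\omega = 1/c^2$ (differentiability in $c$ then follows by the chain rule), writes the equation for the exact perturbation $\hat p_j$ with the \emph{perturbed} operator $(\omega_0+\tilde\omega)\partial_t^2-\Delta$ on the left and the exactly linear source $-\tilde\omega\,\partial_t^2 p_j^0$ on the right, and then lets the second-order remainder $\hat{\hat p}_j$ solve the \emph{background} operator with source $-\tilde\omega\,\partial_t^2\hat p_j$. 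Second, the paper treats the two variables separately; for $\sigma$ it makes no estimate at all, but instead observes that $\Lambda$ is \emph{linear} in the initial pressure $H_j$ and composes with the differentiability of $\sigma\mapsto H_j(\sigma)$, which your elliptic-error argument re-proves directly --- your joint treatment of $(\tilde c,\tilde\sigma)$ in a single remainder $w_j$ is arguably cleaner, since it gives joint Fr\'echet differentiability rather than the two partial derivatives. Third --- and this is where your assessment of the difficulty is off --- the sharp/hidden trace regularity of Lasiecka--Lions--Triggiani type that you expect to need is not needed. The paper's ladder is: $p_j^0\in\cH^4$, hence $\hat p_j\in\cH^3((0,T)\times\Omega)$ with norm $\lesssim\|\tilde\omega\|_{\cH^4(\Omega)}$ (estimate \eqref{EQ:hp d}), hence $\hat{\hat p}_j\in\cH^2((0,T)\times\Omega)$ with norm $\lesssim\|\tilde\omega\|^2_{\cH^4(\Omega)}$ (estimate \eqref{EQ:hhp d}); since the remainder lies in $\cH^2$ of the space-time cylinder, its normal derivative lands in $\cH^{1/2}((0,T)\times\partial\Omega)$ by the \emph{ordinary} trace theorem. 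The second time derivative in the remainder's source, which you rightly flag, is absorbed by this one-derivative-per-step ladder; the $\cC^4$ hypotheses are budgeted precisely for this, and the loss of derivatives along the ladder is exactly why the lemma claims differentiability only into $\cH^{1/2}$ even though the map itself is well defined into $\cH^{5/2}$.
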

The lemma can be proven using standard arguments such as those in~\cite{BaSy-CPDE96,DiDoNaPaSi-SIAM02,Isakov-Book02,Rakesh-CPDE88}. We provide a sketch of the proof in the Appendix. Note that even though $\Lambda(c,\sigma;g_j)$ is well-defined as a map $\Lambda(c,\sigma;g_j): \cC^4(\bar\Omega)\times \cC^4(\Omega)  \mapsto  \cH^{5/2}((0,T)\times\partial\Omega)$, we can only prove its differentiability as a map $\Lambda(c,\sigma;g_j): \cC^4(\bar\Omega)\times \cC^4(\Omega)  \mapsto  \cH^{1/2}((0,T)\times\partial\Omega)$.

We now need to invert~\eqref{EQ:Data Split} (or ~\eqref{EQ:Lin Sys} when multiple data sets are available) to reconstruct the perturbation $(\tilde c, \tilde \sigma)$. It is well-known that with a single measurement $\frac{\partial \tilde p_j}{\partial n}(t,\bx)|_{(0,T)\times\partial\Omega}$, one could reconstruct one of $\tilde c$ and $\tilde H_j=\Gamma (\tilde{\sigma}u_j^0 + \sigma_0 \tilde{u}_j)$ (thus $\tilde \sigma$ since $\tilde H_j$ uniquely determines $\tilde\sigma$~\cite{BaRe-IP11,BaUh-IP10}) assuming that the other is known~\cite{Isakov-Book02,StUh-TAMS13,Yamamoto-JIIP94,Yamamoto-IP95}. In a slightly different  setting, Stefanov and Uhlmann~\cite{StUh-IPI13} showed that even if a single measurement $\frac{\partial \tilde p_j}{\partial n}(t,\bx)|_{(0,T)\times\partial\Omega}$ is enough to reconstruct the pair $(\tilde c, \tilde H_j)$ (and thus $(\tilde c, \tilde\sigma)$) uniquely, the reconstruction would be extremely \emph{unstable}~\cite[Theorem 1]{StUh-IPI13}. Our hope here is that, without introducing new unknowns (since we reconstruct $\tilde \sigma$ directly, not $\tilde H_j$), by using multiple data sets we can improve the stability of the reconstruction, assuming that uniqueness can be achieved.

\subsection{Reconstruction based on Born approximation}

To invert the linear system~\eqref{EQ:Lin Sys}, we use the technique of Landweber iteration~\cite{Kirsch-Book11}. The iteration takes the form:
\begin{equation}
	\left(
	\begin{array}{c}
	\tilde c_{k+1}\\
	\tilde \sigma_{k+1}
	\end{array}
	\right)
	=(\bI-\tau\bLambda_{c,\sigma}^*(c_0,\sigma_0)\bLambda_{c,\sigma}(c_0,\sigma_0)) 	
	\left(
	\begin{array}{c}
	\tilde c_{k}\\
	\tilde \sigma_{k}
	\end{array}
	\right)
	+\tau 	\bLambda_{c,\sigma}^*(c_0,\sigma_0)\frac{\partial \tilde\bp}{\partial \bn},\quad k\ge 0,
\end{equation}
with a reasonable given initial guess. The parameter $\tau$, $0<\tau<2/\Sigma^2$ with $\Sigma$ being the largest singular value of $\bLambda_{c,\sigma}(c_0,\sigma_0)$, is a positive algorithmic parameter that we select by trial and error since we do not have good estimates on the singular values of $\bLambda_{c,\sigma}(c_0,\sigma_0)$. The components of the adjoint operator $\bLambda_{c,\sigma}^*(c_0,\sigma_0)$
\begin{equation}
	\bLambda_{c,\sigma}^*(c_0,\sigma_0) = 
	\left(
	\begin{array}{ccc}
	\Lambda_c^{1*}(c_0,\sigma_0) & \cdots & \Lambda_c^{J*}(c_0,\sigma_0)\\
	\Lambda_\sigma^{1*}(c_0,\sigma_0) & \cdots & \Lambda_\sigma^{J*}(c_0,\sigma_0)
	\end{array}
	\right),
\end{equation}
are given as follows. The adjoint operator $\Lambda_c^{j*}(c_0,\sigma_0)$, in the sense of 
\[
\aver{\Lambda_c^j(c_0,\sigma_0) \tilde c, y_j}_{L^2((0,T)\times\partial\Omega)}=\aver{\tilde c,\Lambda_c^{j*}(c_0,\sigma_0)y_j}_{L^2(\Omega)},\ \ \forall y_j\in L^2((0,T)\times\partial\Omega),
\]
is given as
\begin{equation}
	\Lambda_c^{j*}(c_0,\sigma_0) y_j = \int_0^T \dfrac{2}{c_0^3} \dfrac{\partial^2 p_j^0}{\partial t^2}(t,\bx) \tilde q_j(t,\bx) dt  
\end{equation}
with $\tilde q_j$ the solution to the following adjoint wave equation:
\begin{equation}\label{EQ:Acous Sol Pert Adj}
	\begin{array}{rcll}
  	\dfrac{1}{c_0^2(\bx)} \dfrac{\partial^2 \tilde q_j}{\partial t^2} -\Delta \tilde q_j &=& 0, & \mbox{in}\ (0,T)\times \Omega\\
	\tilde q_j(T,\bx) &=& 0, &\mbox{in}\ \Omega\\
	\dfrac{\partial \tilde q_j}{\partial t}(T,\bx) &=& 0, &\mbox{in}\ \Omega\\
	\tilde q_j(t,\bx) &=& -y_j, & \mbox{on}\ (0,T)\times\partial\Omega
	\end{array}
\end{equation}
The adjoint operator $\Lambda_\sigma^{j*}(c_0,\sigma_0)$, in the sense of 
\[
\aver{\Lambda_\sigma^j(c_0,\sigma_0) \tilde \sigma, z_j}_{L^2((0,T)\times\partial\Omega)}=\aver{\tilde \sigma,\Lambda_\sigma^{j*}(c_0,\sigma_0)z_j}_{L^2(\Omega)},\ \ \forall z_j\in L^2((0,T)\times\partial\Omega),
\]
is given as
\begin{equation}
	\Lambda_\sigma^{j*} z_j = -(\dfrac{1}{c_0^2}\Gamma \dfrac{\partial\tilde q_j}{\partial t}(0,\bx)+\tilde v_j)u_j^0,
\end{equation}
where $\tilde q_j$ is now the solution to~\eqref{EQ:Acous Sol Pert Adj} with $y_j$ replaced by $z_j$ and $\tilde v_j$ is the solution to the adjoint diffusion equation:
\begin{equation}\label{EQ:Diff Sol Pert Adj}
\begin{array}{rcll}
-\nabla \cdot D\nabla \tilde v_j (\bx) + \sigma_0 (\bx) \tilde v_j (\bx) &=& -\dfrac{1}{c_0^2}\Gamma\sigma_0 \dfrac{\partial\tilde q_j}{\partial t}(0,\bx), &\mbox{in}\ \Omega\\
\tilde v_j &=& 0,  & \mbox{on}\ \partial \Omega
\end{array}
\end{equation}
Note that even though we split the components for $c$ and $\sigma$ in the datum in the form of ~\eqref{EQ:Data Split} for the convenience of presentation, we do not need to solve two wave equations, ~\eqref{EQ:Acous Sol Pert c} and ~\eqref{EQ:Acous Sol Pert sigma}, to compute the datum. Instead we need only to solve one wave equation, i.e~\eqref{EQ:Acous Sol Pert}. Therefore, in each iteration of the Landweber algorithm, we need to solve $J$ forward wave equations and $J$ forward diffusion equations to evaluate $\bff_k\equiv\bLambda_{c,\sigma}(c_0,\sigma_0)\left(\begin{array}{c}\tilde c_k\\ \tilde \sigma_k \end{array}\right)$ and then $J$ adjoint wave equations and $J$ adjoint diffusion equations to evaluate $\bLambda_{c,\sigma}^*(c_0,\sigma_0)\bff_k$. The last term in the iteration does not change during the iteration, so it needs only to be computed once before the iteration starts.

\section{One-step nonlinear reconstruction}
\label{SEC:Min}

To solve the full nonlinear inverse problem, we take an optimal control approach. We look for the solution to the inverse problem as 
\begin{equation}
	\left(
	\begin{array}{c}
	c_{\rm min}\\
	\sigma_{\rm min}
	\end{array}
	\right)=\argmin_{(c,\sigma)\in\cA_{\alpha}\times\cA_{\beta}} F(c,\sigma),
\end{equation}
with the objective functional $F$ given as:
\begin{equation}\label{EQ:Obj}
F(c, \sigma) = \frac{1}{2} \|\bLambda(c,\sigma;\bg)-\frac{\partial \bp}{\partial \bn}^\#\|^2_{[L^2((0,T)\times\partial\Omega)]^J},
\end{equation}
where $\frac{\partial \bp}{\partial \bn}^\#$ is the collection of measured acoustic data.

We implemented the Levenberg-Marquardt method~\cite{Kelley-Book99,NoWr-Book99} to solve the minimization problem. The method is characterized with the following iteration:
\begin{equation}\label{EQ:LM Iter}
	\left(
	\begin{array}{c}
	c_{k+1}\\
	\sigma_{k+1}
	\end{array}
	\right)
	=	
	\left(
	\begin{array}{c}
	c_{k}\\
	\sigma_{k}
	\end{array}
	\right)
	-(\bLambda_{c,\sigma}^*(c_k,\sigma_k)\bLambda_{c,\sigma}(c_k,\sigma_k)+\mu_k\bI)^{-1} \bLambda_{c,\sigma}^*(c_k,\sigma_k)\bz_k,\quad k\ge 0,
\end{equation}
where $\mu_k$ is an algorithm parameter, $\bz_k$ is the residual at step $k$:
\[
\bz_k=\bLambda(c_k,\sigma_k;\bg)-\frac{\partial \bp}{\partial \bn}^\#,
\]
and $\bLambda_{c,\sigma}(c_k,\sigma_k)$ is the Fr\'echet derivative of $\bLambda$ at $(c_k,\sigma_k)$. In our implementation, we take the Levenberg-Marquardt parameter $\mu_k$ as a small constant, although we are aware that there are principles in the literature to guide the selection of this parameter in a more ``optimal'' way; see for instance~\cite{Kelley-Book99,NoWr-Book99}.

The Levenberg-Marquardt algorithm~\eqref{EQ:LM Iter} requires the inverse of the operator $(\bLambda_{c,\sigma}^*\bLambda_{c,\sigma}+\mu_k\bI)$ at each iteration. In our implementation, we do not form the operator (which in discrete case is a matrix) explicitly and then invert it, since this would require large computer memory to store the matrix. Instead, we use a matrix-free approach to save memory in the following way. For any function (which in discrete case is a vector) $\by$, to compute $\bz=(\bLambda_{c,\sigma}^*\bLambda_{c,\sigma}+\mu_k\bI)^{-1}\by$, we solve the following equation:
\begin{equation}
(\bLambda_{c,\sigma}^*\bLambda_{c,\sigma}+\mu_k\bI)\bz = \by .
\end{equation}
This is a symmetric positive definite problem which we solve with a standard conjugate gradient method~\cite{Saad-Book03}. The conjugate gradient method does not require the explicit form of the operator $(\bLambda_{c,\sigma}^*\bLambda_{c,\sigma}+\mu_k\bI)$, but only its action on given functions (vectors in discrete case). For any given $\bz$, we solve $J$ forward wave equations and $J$ forward diffusion equations to evaluate $\bff\equiv\bLambda_{c,\sigma}(c_k,\sigma_k)\bz$ and then $J$ adjoint wave equations and $J$ adjoint diffusion equations to evaluate $\bLambda_{c,\sigma}^*(c_k,\sigma_k)\bff$.

To impose the bound constraints on the coefficients, that is $c\in \cA_\alpha$ (i.e. $\underline{\alpha}\le c(\bx) \le \overline{\alpha}$) and $\sigma\in\cA_\beta$ (i.e. $\underline{\beta}\le \sigma(\bx) \le \overline{\beta}$), we implement the algorithm for the new variables $\upsilon(\bx)$ and $\eta(\bx)$ that are related respectively to $c(\bx)$ and $\sigma(\bx)$ through the relations:
\begin{equation}
	c(\bx)= \frac{\underline{\alpha}+\overline{\alpha}}{2}+\frac{\overline{\alpha}-\underline{\alpha}}{2}\tanh \upsilon(\bx), \qquad \sigma(\bx)= \frac{\underline{\beta}+\overline{\beta}}{2}+\frac{\overline{\beta}-\underline{\beta}}{2}\tanh\eta(\bx), \qquad 
\end{equation}
The Fr\'echet derivatives of $\bLambda$ with respect to the new variables at $(\upsilon_0, \eta_0)$ in the direction $(\tilde \upsilon, \tilde \eta)$ can be computed using the chain rule straightforwardly as
\begin{equation}
\bLambda_{\upsilon}(\upsilon_0)\tilde \upsilon =\bLambda_{c}(c(\upsilon_0)) [\frac{\overline{\alpha}-\underline{\alpha}}{2} \sech^2\upsilon_0(\bx) \tilde \upsilon],\quad
\bLambda_{\eta}(\eta_0)\tilde \eta =\bLambda_{\sigma}(\sigma(\eta_0)) [\frac{\overline{\beta}-\underline{\beta}}{2} \sech^2\eta_0(\bx) \tilde \eta].
\end{equation}
Note that we have chosen different bounds for $c$ (i.e $\overline{\alpha}$ and $\underline{\alpha}$) and $\sigma$ (i.e. $\overline{\beta}$ and $\underline{\beta}$) since in practice the two functions have different ranges of values.

\section{Numerical experiments}
\label{SEC:Num}

We now present some numerical experiments to check the performance of the reconstruction algorithm. To simplify the presentation, we non-dimensionalize the problem so that all the numbers presented below have no dimensions. We consider the problem in the square domain $\Omega=[0\ 2]\times[0\ 2]$ with constant diffusion coefficient $D=0.02$ and Gr\"uneisen coefficient $\Gamma=1$.

In our implementation, we discretize the forward and adjoint diffusion equations, for instance ~\eqref{EQ:Diff} and~\eqref{EQ:Diff Sol Pert Adj},  with a first-order finite element method. We discretize the forward and adjoint wave equations, for instance~\eqref{EQ:Acous} and~\eqref{EQ:Acous Sol Pert Adj}, with a standard second-order finite difference scheme on a uniform grid. The finite element discretization of the diffusion equations is performed on a triangle mesh that shares the same nodes as the uniform grid for the wave equation. This way we do not need to interpolate between the two types of grids when using quantities from the diffusion solution in the wave equations or vice versa.

We will perform numerical reconstructions in both the nonlinear and the linearized settings. To generate synthetic data for the nonlinear inversions, we solve the diffusion equation~\eqref{EQ:Diff} and the wave equation~\eqref{EQ:Acous} with the true absorption coefficient and ultrasound speed, and then compute the data using~\eqref{EQ:Data Acous}. When adding multiplicative random noise to the datum $\frac{\partial p}{\partial n}$ we perform the transformation $\frac{\partial p}{\partial n}\to \frac{\partial p}{\partial n}\times(1+\frac{\kappa}{100}\cdot {\rm rand})$ where {\rm rand} is a uniform random variable with mean $0$ and variance $1$ (and thus range $[-\sqrt{3}, \sqrt{3}]$). We use $\kappa$ to measure the level of noise in the data. The values of $\kappa$ will be given in the simulations we present below. For the linearized inversions, we construct synthetic data directly using the linearized model~\eqref{EQ:Data Split} for a given perturbation of the coefficients, $(\tilde c,\tilde \sigma)$. \emph{This means that the data for the linearized inversions are exact. These data do not contain information on the accuracy of the linearized problem as an approximation to the true nonlinear problem}. The linearized inversion simulations we show below will only provide information about the invertability and stability of the linearized inverse problem. 

We measure the quality of the reconstruction with the maximal relative error. For parameter $\frak p$, the error is defined as $\|(\frak p^r-\frak p^t)/\frak p^t\|_{L^\infty}$ where $\frak p^t$ is the true coefficient while $\frak p^r$ is the reconstructed coefficient.

\paragraph{Numerical experiment 1.} The first numerical experiment is devoted to the reconstruction of the absorption coefficient assuming that the ultrasound speed is known. The true absorption coefficient is taken as
\begin{equation}\label{EQ:Abso Coeff}
	\sigma(\bx)=\left\{
	\begin{array}{rl}
		0.15, & \bx\in[0.5\ 1.5]\times[0.5\ 1.5]\\
		0.10, & \bx\in\Omega\backslash[0.5\ 1.5]\times[0.5\ 1.5] .
	\end{array}\right.
\end{equation}
We performed nonlinear reconstructions with the Levenberg-Marquardt algorithm using three types of data: (i) noise-free data ($\kappa=0.0$), (ii) noisy data with $\kappa=0.5$, and (iii) noisy data with $\kappa=1.0$. The data are collected from eight different optical illuminations that are line sources supported on each side of the domain with spatially varying strengths such as those used in~\cite{BaRe-IP11}. The results of the reconstructions are shown in Fig.~\ref{FIG:Mua}. The maximal relative errors in the reconstructions are $0.15$, $0.28$ and $0.64$ respectively. The quality of the reconstructions is very similar to those published in the literature~\cite{AmBoJuKa-SIAM10,CoArKoBe-AO06,CoTaAr-CM11,GaOsZh-LNM12,LaCoZhBe-AO10,MaRe-CMS14,NaSc-SIAM14,PuCoArKaTa-IP14,ReGaZh-SIAM13,SaTaCoAr-IP13,ShCoZe-AO11,Zemp-AO10}. We observe in our numerical experiments that the reconstructions in this case are very robust to changes in initial guesses. Moreover, we impose very loose bounds on the absorption coefficient in the reconstructions: $\underline{\beta}=0 \le \sigma \le 1.0=\overline{\beta}$. Our experience is that this bound is not necessary at all when $\sigma$ is the only unknown to be reconstructed.
\begin{figure}[!ht]
\centering
\includegraphics[angle=0,width=0.90\textwidth]{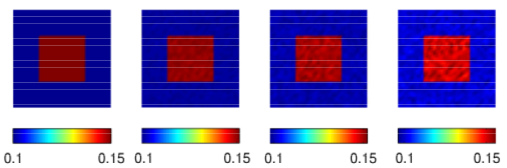} 
\caption{Reconstructions of the absorption coefficient with \emph{known} ultrasound speed. Left to right: true $\sigma(\bx)$ given in~\eqref{EQ:Abso Coeff}, $\sigma$ reconstructed from noise-free data, $\sigma$ reconstructed from noisy data with $\kappa=0.5$, and $\sigma$ reconstructed from noisy data with $\kappa=1.0$.}
\label{FIG:Mua}
\end{figure}

\paragraph{Numerical experiment 2.} In the second numerical experiment, we reconstruct the ultrasound speed $c(\bx)$ assuming that the absorption coefficient is known. We consider the problem with the true ultrasound speed
\begin{equation}\label{EQ:Speed}
	c(\bx)=1.0+0.2 \times \exp(-\dfrac{|\bx-(1,1)|^2}{2\times 0.5^2}),\quad \bx\in\Omega.
\end{equation}
We again performed reconstructions with the three types of data (i)-(iii). The results are shown in Fig.~\ref{FIG:Speed}. We imposed the bound $\underline{\alpha}=0.8\le c \le 1.3=\overline{\alpha}$ on the unknown. The maximal relative errors in the reconstructions are respectively  $0.16$, $0.30$ and $0.57$. The initial guess for all the reconstructions shown is $c=0.9$, although we observe in our numerical experiments that most constant initial guesses within the bounds lead to almost identical final reconstructions.
\begin{figure}[!ht]
\centering
\includegraphics[angle=0,width=0.90\textwidth]{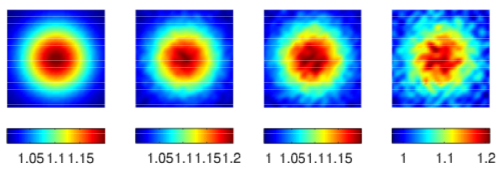} 
\caption{Reconstructions of the ultrasound speed with \emph{known} absorption coefficient. Left to right: true $c(\bx)$ given in~\eqref{EQ:Speed}, $c$ reconstructed from noise-free data ($\kappa=0.0$), $c$ reconstructed from noisy data with $\kappa=0.5$, and $c$ reconstructed from noisy data with $\kappa=1.0$.}
\label{FIG:Speed}
\end{figure}
The data used in the reconstructions are again collected from 8 different optical illuminations. Even though we have more blurring in the reconstructed images, the overall quality of the reconstructions is reasonable, considering that we do not need very strict bounds on the unknown to get these reconstructions.

\paragraph{Numerical experiment 3.} In this experiment, we reconstruct, under the Born approximation, the sound speed $c$ and the absorption coefficient $\sigma$ as shown in the first column of Fig.~\ref{FIG:Simul1-A}. The background of the linearization is $(c_0,\sigma_0)=(1.0,0.1)$. We again use data collected from eight different illuminations. Shown are the reconstructions from noise-free data ($\kappa=0.0$, column II), the reconstructions from noisy data with $\kappa=0.5$ (column III) and the reconstructions from noisy data with $\kappa=1.0$ (column IV). The maximal relative errors in the reconstructions are $(0.40, 0.90)$, $(0.80, 1.20)$, and $(1.05, 2.02)$ respectively. We started all the Landweber iterations with initial condition $(\tilde c,\tilde \sigma)=(0,0)$.
\begin{figure}[!ht]
\centering
\includegraphics[angle=0,width=0.90\textwidth]{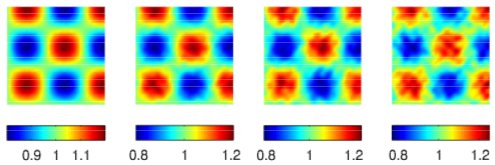}\\
\includegraphics[angle=0,width=0.90\textwidth]{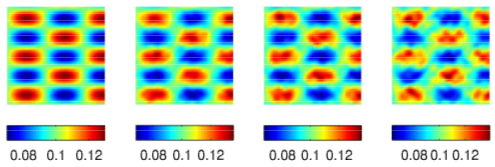}
\caption{Linearized reconstructions of the coefficients $c$ (top) and $\sigma$ (bottom). Shown from left to right are: the true $(c,\sigma)$, the reconstructions from noise-free data, the reconstructions from noisy data with $\kappa=0.5$, the reconstructions from noisy data with $\kappa=1.0$.}
\label{FIG:Simul1-A}
\end{figure}
Let us emphasize again that the synthetic data used in this experiment are constructed directly from the linearized model~\eqref{EQ:Data Split}. Therefore, the data are exact (besides the artificial noise added to them) in the sense that the error in the approximation to the true nonlinear problem is completely neglected. What we are interested in studying is the stability of the linearized inverse problem, not the accuracy of the Born approximation. This is why we can consider fairly large perturbations to the background here.

\paragraph{Numerical experiment 4.} We repeat the reconstructions in the previous experiment (i.e. Numerical experiment 3) in the nonlinear setting with the Levenberg-Marquardt algorithm. The results are shown in Fig.~\ref{FIG:Simul1-B}. The maximal relative errors in the reconstructions are $(0.42, 0.69)$, $(0.61, 1.02)$, and $(0.89, 1.87)$ respectively for data with noise level $\kappa=0.0$, $\kappa=0.5$ and $\kappa=1.0$. In all the reconstructions, we use the initial guess of $(c,\sigma)=(0.9,0.09)$ and the Levenberg-Marquardt algorithm is stopped after $80$ iterations. We impose the bounds $\underline{\alpha}=0.7\le c \le 1.3=\overline{\alpha}$, $\underline{\beta}=0.07\le \sigma \le 0.15=\overline{\beta}$.
\begin{figure}[!ht]
\centering
\includegraphics[angle=0,width=0.90\textwidth]{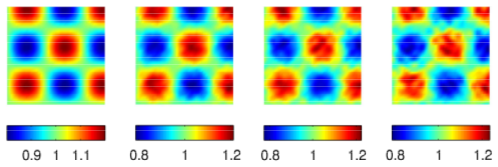}\\
\includegraphics[angle=0,width=0.90\textwidth]{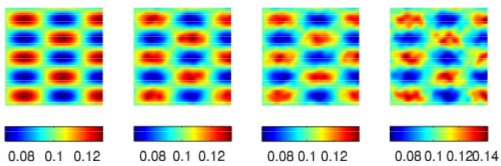}
\caption{Same as Fig.~\ref{FIG:Simul1-A} except that the reconstructions are performed in the nonlinear setting using the Levenberg-Marquardt algorithm.}
\label{FIG:Simul1-B}
\end{figure}

\paragraph{Numerical experiment 5.} In the last numerical experiment, we reconstruct the sound speed $c$ and the absorption coefficient $\sigma$ as shown in the first column of Fig.~\ref{FIG:Simul2}. We again use data collected from eight different illuminations. Shown are the reconstructions from noise-free data ($\kappa=0.0$, column II), the reconstructions from noisy data with $\kappa=0.5$ (column III) and the reconstructions from noisy data with $\kappa=1.0$ (column IV). The maximal relative errors in the reconstructions are $(0.97, 0.56)$, $(1.17, 0.88)$, and $(1.41, 0.99)$ respectively. The Levenberg-Marquardt algorithm is stopped at iteration $100$ in all the reconstructions in this case and the initial guess for all the reconstructions is $(c,\sigma)=(1.0, 0.11)$. We impose stricter bounds on $c$ in this case to have the algorithm converge to reasonable solutions: $\underline{\alpha}=0.85\le c \le 1.2=\overline{\alpha}$, $\underline{\beta}=0.1\le \sigma \le 0.2=\overline{\beta}$. Note that the bounds on the absorption coefficient coincide with the bounds on the true $\sigma$ value. If the constant initial guess is beyond the bounds, the algorithm usually does not converge.
\begin{figure}[!ht]
\centering
\includegraphics[angle=0,width=0.90\textwidth]{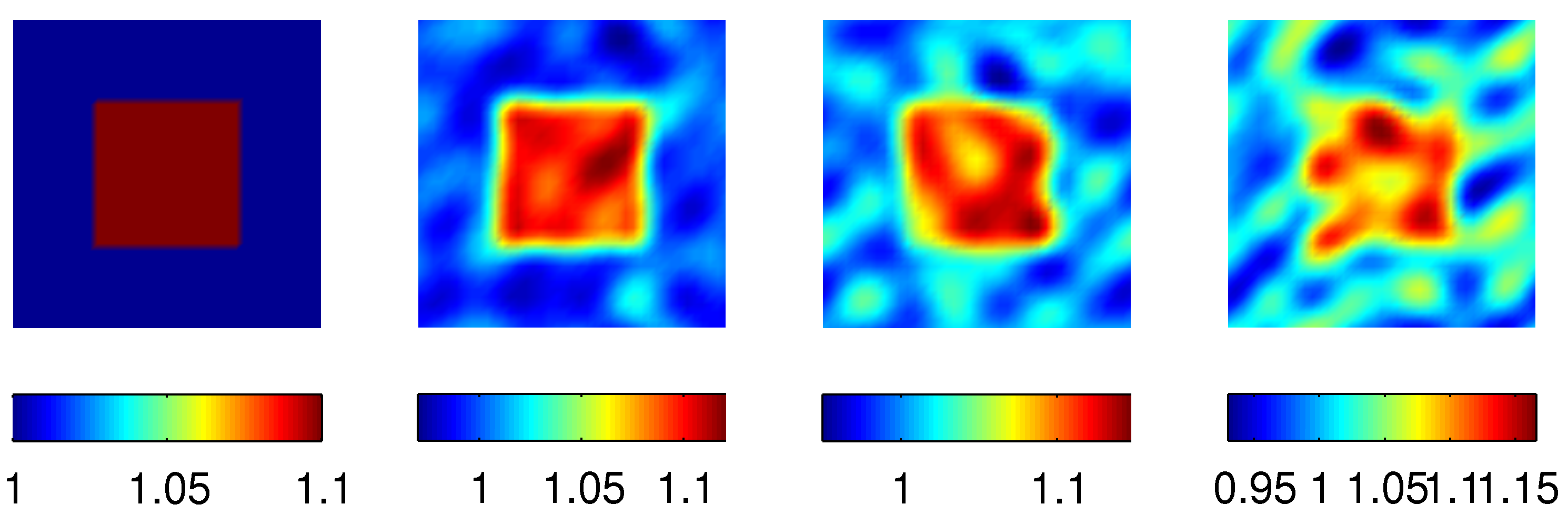}\\
\includegraphics[angle=0,width=0.90\textwidth]{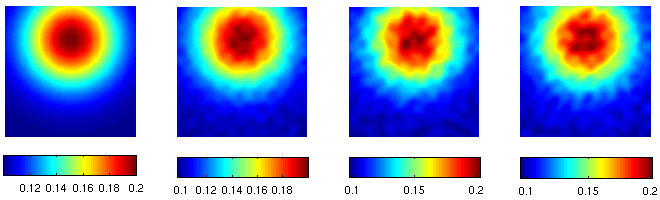}
\caption{Nonlinear reconstructions of the coefficients $c$ (top) and $\sigma$ (bottom). Shown from left to right are: the true $(c,\sigma)$, the reconstructions from noise-free data ($\kappa=0.0$), the reconstructions from noisy data with $\kappa=0.5$, the reconstructions from noisy data with $\kappa=1.0$.}
\label{FIG:Simul2}
\end{figure}

We observe from all the simulations presented in this section that the one-step reconstruction strategy performs reasonably well in either the linearized or the nonlinear case, with \emph{a priori} bounds imposed on the unknowns. Tuning algorithmic parameters, such as the $\tau$ in the Landweber iteration and the $\mu_k$ in the Levenberg-Marquardt algorithm, the maximal number of iterations allowed for the iterative algorithms, and the initial guesses etc, can certainly improve the quality of the reconstructions slightly as we observed in some of the simulations we performed. We did not pursue in that direction. The use of data from even more illuminations would certainly help at least to reduce the average noise level in the data. We did not pursue in that direction either. The simulations we present here provide us a rough idea about the quality of the reconstructions that we could get.

\section{Concluding remarks}
\label{SEC:Concl}

There have been several works in recent years on the simultaneous reconstruction of the ultrasound speed and the optical properties~\cite{JiWa-PMB06,KiSc-SIAM13,LiUh-arXiv15,MaWaWaAn-SPIE15,ScAn-JOSA11,StUh-IPI13,Treeby-JBO13,TrZhThCo-UMB11,ZhAn-SPIE06} in QPAT. Every method proposed attempts to follow the two-step philosophy, that is to first reconstruct ultrasound speed and the initial pressure field and then reconstruct the optical properties. In some cases, additional ultrasound measurements are taken to supplement the photoacoustic measurements~\cite{MaWaWaAn-SPIE15}. We proposed here a reconstruction strategy that combines the two-step reconstruction process into a one-step process to reconstruct directly the sound speed and the optical properties without reconstructing the initial pressure field, which is an intermediate variable. 

The main advantage of our method is that it allows the use of data sets from multiple illuminations which can stabilize the reconstruction when the ultrasound speed is treated as a unknown. When the intermediate variable, the initial pressure field, is to be reconstructed as in~\cite{JiWa-PMB06,KiSc-SIAM13,LiUh-arXiv15,ScAn-JOSA11,StUh-IPI13,Treeby-JBO13,TrZhThCo-UMB11,ZhAn-SPIE06}, it changes with illuminations. Therefore adding data from more illuminations simply adds more unknowns in the reconstruction process. While in our case, adding more data sets does not add more unknowns since $(c,\sigma)$ do not change with illuminations.

The results in~\cite{BaRe-IP11,BaUh-IP10,Isakov-Book02,StUh-TAMS13,Yamamoto-JIIP94,Yamamoto-IP95} show that one can reconstruct uniquely and stably either the ultrasound speed or the optical absorption coefficient if the other one is known. We are not aware of any uniqueness result on the simultaneous reconstruction of both coefficients besides in special cases such as these in~\cite{KiSc-SIAM13,LiUh-arXiv15}. Numerical simulations with synthetic data in Section~\ref{SEC:Num} show that the simultaneous reconstruction is in fact relatively stable when multiple data sets are used, with appropriate \emph{a priori} bounds imposed on the unknowns. This does not contradict the theoretical results in~\cite{StUh-IPI13} on the instability of the problem with a single measurement. We are currently conducting theoretical studies on the stability property of our method. Results will be reported elsewhere.

Let us finish this paper with two additional remarks. First, there have been similar one-step algorithms in the literature~\cite{HaShZe-BOE13,ShHaZe-BOE12} that reconstruct directly optical properties from photoacoustic data. These algorithms are different from the one we proposed here since they all treat the ultrasound speed as \emph{known}. Second, in our formulation, we have assumed that both the diffusion coefficient $D$ and the Gr\"uneisen coefficient $\Gamma$ are known. We can easily modify the algorithm to include $D$ or $\Gamma$ as a unknown in the reconstructions as well. Note that it has been proved~\cite{BaRe-IP11} that one can not reconstruct simultaneously $D$, $\sigma$ and $\Gamma$ with mono-wavelength optical illuminations. Therefore, it is not desired to reconstruct $(c, D, \sigma, \Gamma)$ in the one-step algorithm. However, one can indeed attempt to reconstruct simultaneously $c$, $\sigma$ and $\Gamma$, for instance.

\section*{Acknowledgments}

We would like to thank the anonymous referees for their useful comments, especially for pointing out the references~\cite{HaShZe-BOE13,ShHaZe-BOE12}, that help us improve the quality of this paper. We would also like to thank Professor Hongyu Liu for pointing out the reference~\cite{LiUh-arXiv15}. This work is partially supported by the National Science Foundation through grant DMS-1321018, and the University of Texas through a Moncrief Grand Challenge Faculty Award.

\section*{Appendix: Fr\'echet differentiability of $\Lambda(c,\sigma;g_j)$}

We provide a brief proof of the differentiability of $\Lambda(c,\sigma;g_j)$ as stated in Lemma~\ref{LMMA:Derivative}. To simplify the presentation, we will use the short notation $\omega=\frac{1}{c^2}$, and therefore $\omega_0=\frac{1}{c_0^2}$, $\tilde\omega=-\frac{2\tilde c}{c_0^3}$. We will show differentiability of $\Lambda(\omega,\sigma;g_j)$ with respect to $(\omega,\sigma)$.
\begin{proof}[Proof of Lemma~\ref{LMMA:Derivative}]
We first prove the differentiability with respect to $\omega$ (and thus $c$). Let $p_j(\omega_0+\tilde\omega,\sigma_0)$ and $p_j(\omega_0,\sigma_0)\equiv p_j^0$ be the solution to the wave equation with coefficients $(\omega_0+\tilde\omega,\sigma_0)$ and $(\omega_0,\sigma_0)$ respectively. Define $\hat p_j=p_j(\omega_0+\tilde\omega,\sigma_0)-p_j(\omega_0,\sigma_0)$ and $\hat{\hat p}_j=p_j(\omega_0+\tilde\omega,\sigma_0)-p_j(\omega_0,\sigma_0)-\tilde p_j^\omega$, where $\tilde p_j^\omega$ is the solution to~\eqref{EQ:Acous Sol Pert c} with $-\frac{2\tilde c}{c_0^3}$ replaced by $\tilde\omega$. We then verify that $\hat p_j$ solves
\begin{equation}\label{EQ:hp}
	\begin{array}{rcll}
  	(\omega_0+\tilde\omega)\dfrac{\partial^2 \hat p_j}{\partial t^2} -\Delta \hat p_j &=& -\tilde\omega\dfrac{\partial^2 p_j^0}{\partial t^2}, & \mbox{in}\ (0,T)\times \Omega\\
	\hat p_j(0,\bx) &=& 0, &\mbox{in}\ \Omega\\
	\dfrac{\partial \hat p_j}{\partial t}(0,\bx) &=& 0, &\mbox{in}\ \Omega\\
	\hat p_j(t,\bx) &=& 0, & \mbox{on}\ (0,T)\times\partial\Omega
	\end{array}
\end{equation}
and $\hat{\hat p}_j$ solves
\begin{equation}\label{EQ:hhp}
	\begin{array}{rcll}
  	\omega_0\dfrac{\partial^2 \hat{\hat p}_j}{\partial t^2} -\Delta \hat{\hat p}_j &=& -\tilde\omega\dfrac{\partial^2 \hat p_j}{\partial t^2}, & \mbox{in}\ (0,T)\times \Omega\\
	\hat{\hat p}_j(0,\bx) &=& 0, &\mbox{in}\ \Omega\\
	\dfrac{\partial \hat{\hat p}_j}{\partial t}(0,\bx) &=& 0, &\mbox{in}\ \Omega\\
	\hat{\hat p}_j(t,\bx) &=& 0, & \mbox{on}\ (0,T)\times\partial\Omega.
	\end{array}
\end{equation}
With the assumptions in the lemma, we conclude from standard elliptic theory~\cite{Evans-Book98,GiTr-Book00} that the diffusion equation~\eqref{EQ:Diff Backgr} admits a unique solution $u_j^0 \in \cC^4(\Omega)$. This implies that $\Gamma\sigma_0 u_j^0 \in\cC^4(\Omega)$. Therefore the wave equation~\eqref{EQ:Acous Backgr} admits a unique solution $p_j^0\equiv p_j(\omega_0,\sigma_0) \in\cC^4((0,T)\times\Omega)$ following theory in~\cite{DiDoNaPaSi-SIAM02,Isakov-Book02}. Moreover, $\frac{\partial^2 p_j^0}{\partial t^2}\in \cC^2((0,T)\times\Omega)$ and we have from~\eqref{EQ:hp} that $\hat p_j\in\cC^3((0,T)\times\Omega)$ and
\begin{multline}\label{EQ:hp d}
\|\hat p_j\|_{\cH^3((0,T)\times\Omega)}\le C_1\|\tilde\omega\dfrac{\partial^2 p_j^0}{\partial t^2}\|_{\cH^2((0,T)\times\Omega)}\le C_1\|\dfrac{\partial^2 p_j^0}{\partial t^2}\|_{\cH^2((0,T)\times\Omega)}\|\tilde\omega\|_{\cH^4(\Omega)} \\ \le C_1\|p_j^0\|_{\cH^4((0,T)\times\Omega)}\|\tilde\omega\|_{\cH^4(\Omega)} \le C_2 \|\Gamma\sigma_0 u_j^0\|_{\cH^4(\Omega)} \|\tilde\omega\|_{\cH^4(\Omega)}.
\end{multline}
Similarly, we have from~\eqref{EQ:hhp} that
\begin{multline}\label{EQ:hhp d}
\|\hat{\hat p}_j\|_{\cH^2((0,T)\times\Omega)}\le \tilde C_1\|\tilde\omega\dfrac{\partial^2 \hat p_j}{\partial t^2}\|_{\cH^1((0,T)\times\Omega)}\le \tilde C_1\|\dfrac{\partial^2 \hat p_j}{\partial t^2}\|_{\cH^1((0,T)\times\Omega)}\|\tilde\omega\|_{\cH^4(\Omega)} \\ 
\le \tilde C_1\|\hat p_j\|_{\cH^3((0,T)\times\Omega)}\|\tilde\omega\|_{\cH^4(\Omega)}.
\end{multline}
We now combine the trace theorem, ~\eqref{EQ:hp d} and~\eqref{EQ:hhp d} to get
\begin{equation}
\|\frac{\partial\hat{\hat p}_j}{\partial n}\|_{\cH^{1/2}((0,T)\times\partial\Omega)}\le \tilde C_2 \|\Gamma\sigma_0 u_j^0\|_{\cH^4(\Omega)} \|\tilde\omega\|^2_{\cH^4(\Omega)}.
\end{equation}
This shows the differentiability with respect to $\omega$.

To prove its differentiability with respect to $\sigma$, we observe that $\Lambda(c,\sigma;g_j)=\Lambda(c,H_j(\sigma);g_j)$ is linear with respect to $H_j$. Therefore, $\Lambda$ is differentiable with respect to $H_j\in\cC^4(\Omega)$ with the derivative at $(c_0, H_j^0)$ in the direction $\tilde H_j$ given as $\Lambda_{H_j}(c_0,H_j^0)\tilde H_j=\frac{\partial p_j^H}{\partial n}|_{(0,T)\times\partial\Omega}$ where $p_j^H$ solves
\begin{equation}\label{EQ:pH}
	\begin{array}{rcll}
  	\dfrac{1}{c_0^2}\dfrac{\partial^2 p_j^H}{\partial t^2} -\Delta p_j^H &=& 0, & \mbox{in}\ (0,T)\times \Omega\\
	p_j^H(0,\bx) &=& \tilde H_j, &\mbox{in}\ \Omega\\
	\dfrac{\partial p_j^H}{\partial t}(0,\bx) &=& 0, &\mbox{in}\ \Omega\\
	p_j^H(t,\bx) &=& 0, & \mbox{on}\ (0,T)\times\partial\Omega.
	\end{array}
\end{equation}
We now recall that $H_j(\sigma):\cC^4(\Omega) \to \cC^4(\Omega)$ is Fr\'echet differentiable with the derivative at $\sigma_0$ in direction $\tilde\sigma$ given as $H_{j\sigma}(\sigma_0)\tilde\sigma=\Gamma(\tilde\sigma u_j^0+\sigma_0 \tilde u_j)$ where $\tilde u_j$ solves~\eqref{EQ:Diff Sol Pert}. The chain rule of differentiation then concludes that $\Lambda$ is differentiable with respect to $\sigma$ at $\sigma_0$ and the derivative is $\Lambda_\sigma^j(c_0,\sigma_0)\tilde\sigma=\Lambda_{H_j}(c_0,\sigma_0)H_{j\sigma}(\sigma_0)\tilde\sigma$.

\end{proof}

{\small

}

\end{document}